\newcommand{\xRrightarrow}[2][]{\ext@arrow 0359\Rrightarrowfill@{#1}{#2}}
\newcommand{\Rrightarrowfill@}{\arrowfill@\equiv\equiv\Rrightarrow}
\newcommand{\xLleftarrow}[2][]{\ext@arrow 3095\Lleftarrowfill@{#1}{#2}}
\newcommand{\Lleftarrowfill@}{\arrowfill@\Lleftarrow\equiv\equiv}
\newcommand{\xLleftRrightarrow}[2][]{\ext@arrow 3399\LleftRrightarrowfill@{#1}{#2}}
\newcommand{\LleftRrightarrowfill@}{\arrowfill@\Lleftarrow\equiv\Rrightarrow}
\newcommand*{\medcap}{\mathbin{\scalebox{1.5}{\ensuremath{\cap}}}}%
\newcommand{\rt}{\rightarrow}
\newcommand{\st}{\stackrel}
\newcommand{\CY}{\mathscr{Y} }
\newcommand{\Prod}{{\rm{Prod}}}
\newcommand{\im}{{\rm{Im}}}
\newcommand{\Add}{{\rm{Add}}}
\newcommand{\add}{{\rm{add}}}
\newcommand{\gen}{{\rm{gen}}}
\newcommand{\cogen}{{\rm{cogen}}}
\newcommand{\Cogen}{{\rm{Cogen}}}
\newcommand{\RNum}[1]{\uppercase\expandafter{\romannumeral #1\relax}}
\newcommand{\pd}{{\rm{pd}}}
\newcommand{\id}{{\rm{id}}}
\newcommand{\fd}{{\rm{fd}}}
\newcommand{\Tr}{{\rm{Tr}}}
\newcommand{\Coker}{{\rm{Coker}}}
\newcommand{\Ker}{{\rm{Ker}}}
\newcommand{\Tor}{{\rm{Tor}}}
\newcommand{\Hom}{{\rm{Hom}}}
\newcommand{\Ext}{{\rm{Ext}}}
\newcommand{\End}{{\rm{End}}}
\newtheorem{theorem}{Theorem}[section]
\newtheorem{corollary}[theorem]{Corollary}
\newtheorem{lemma}[theorem]{Lemma}
\newtheorem{question}[theorem]{Question}
\newtheorem{definition}[theorem]{Definition}
\newtheorem{example}[theorem]{Example}
\newtheorem{remark}[theorem]{Remark}
\theoremstyle{plain}
\theoremstyle{definition}
\numberwithin{equation}{section}
\begin{document}

\author[K. Divaani-Aazar, A. Mahin Fallah and M. Tousi]
{Kamran Divaani-Aazar, Ali Mahin Fallah and Massoud Tousi}

\title[Comparing four definitions of ...]
{Comparing four definitions of cotilting modules}

\address{K. Divaani-Aazar, Department of Mathematics, Faculty of Mathematical Sciences,
Alzahra University, Tehran, Iran.}
\email{kdivaani@ipm.ir}

\address{A. Mahin Fallah, School of Mathematics, Institute for Research in Fundamental Sciences (IPM),
P.O. Box: 19395-5746, Tehran, Iran.}
\email{amfallah@ipm.ir, ali.mahinfallah@gmail.com}

\address{M. Tousi, Department of Mathematics, Faculty of Mathematical Sciences, Shahid Beheshti
University, Tehran, Iran.}
\email{mtousi@ipm.ir}

\subjclass[2020]{16D20; 16Exx; 16E30.}

\keywords{Big cotilting module; $n$-tilting module; product-complete module; right Morita ring; Wakamatsu 
tilting module.\\
The research of the second author is supported by a grant from IPM (No.1403160017).}

\begin{abstract} In contrast to the theory of tilting modules, the dual theory lacks a unified definition.
Nevertheless, several notions of cotilting modules have been proposed. In this paper, we compare four of 
the main definitions of cotilting modules that have appeared in the literature. We show that, in the setting 
of finitely generated right modules, three of these definitions coincide over right Artinian Noetherian 
algebras, and all four coincide over Artin algebras.
\end{abstract}

\maketitle

\section{Introduction}

Tilting theory plays a fundamental role in the representation theory of algebras. 1-tilting modules were
initially defined over finite-dimensional algebras by Brenner and Butler \cite{BB} and Happel and Ringel
\cite{HR}. Subsequently, Miyashita \cite{MI} generalized this concept by introducing the notion of
$n$-tilting modules over arbitrary associative rings with identity.

There are several definitions of the notion of cotilting modules in the context of finitely generated
$R$-modules $C$ satisfying $\id(C_R)\leq 1$, which are generally called 1-cotilting modules; see
\cite{Ba3, C, CDT, CTT, CT, R}. As shown in \cite{H} and \cite{Ba3}, some definitions of 1-cotilting
modules coincide.

Unlike 1-cotilting modules, the definitions of cotilting modules with higher injective dimensions do
not fully align. Several definitions exist for such modules. In this paper, we focus on four of them.

Miyashita \cite{MI} gave a definition for cotilting modules over Noetherian rings. Miyachi \cite{M2}
extended this definition to arbitrary rings and obtained several interesting results within the framework
of coherent rings. Then Auslander and Reiten have given a definition for cotilting modules over Artin
algebras; see \cite{AR1} and \cite{AR2}. Angeleri H\"{u}gel
and Coelho \cite{HC} extended this notion to modules over general associative rings with
identity that are not necessarily finitely generated. These cotilting modules are now known as big cotilting
modules. Recently, Aihara, Araya, Iyama, Takahashi and Yoshiwaki \cite{AAITY} have proposed a new definition
of cotilting modules. Let $n$ be a nonnegative integer, and let $M$ be a right $R$-module with finite
injective dimension at most $n$. To distinguish between these four types of cotilting
modules, we employ the following terminology: If $M$ is cotilting
according to the definition of Miyashita, Auslander-Reiten, Angeleri H\"{u}gel-Coelho, or the definition
of Aihara-Araya-Iyama-Takahashi-Yoshiwaki, respectively, we call it an $n$-M-cotilting,
$n$-AR-cotilting, $n$-big cotilting, and $n$-AAITY-cotilting module. Since the precise
value of the injective dimension is not the primary focus of our discussion, we often omit
the prefix \enquote{$n$}. The literature on cotilting modules is extensive, with significant focus
dedicated to the study of big cotilting modules in particular; see e.g. \cite{AAITY, HPST, Ba1, Ba2,
Ba4, HT,  M1, M2, S1, S2, STH, Y}.

The main results of this paper are Theorems \ref{3.9a}, \ref{3.15a}, and \ref{3.18aaa}. Theorem \ref{3.9a}
compares M-cotilting and big cotilting modules, Theorem \ref{3.15a} compares big cotilting and AAITY-cotilting
modules, and Theorem \ref{3.18aaa} compares M-cotilting and AAITY-cotilting modules. As a consequence, over a
right Artinian Noetherian algebra, a finitely generated right module $C$ is M-cotilting if and only if it is big
cotilting and if and only if it is AAITY-cotilting; see Corollary \ref{3.20}. Moreover, over an Artin algebra,
these four notions of cotilting modules coincide for finitely generated right modules; see Corollary \ref{3.23}.

\section{Preliminaries}

Throughout, rings are associative with identity, and modules are assumed to be unitary. Let $R$ be an associative
ring with identity. We use the notation $M_R$ (respectively, $_RM$) to denote a right (respectively, left)
$R$-module. The category of all right $R$-modules (respectively, finitely generated right $R$-modules) is denoted
by $\text{Mod-}R$ (respectively, $\text{mod-}R$). Let $M_R$ be an $R$-module. We denote by $\Add(M)$ (respectively,
$\add(M)$) the class of right $R$-modules which are isomorphic to a direct summand of a direct sum of copies
(respectively, finitely many copies) of $M$. Dually, $\Prod(M)$ stands for the class of right $R$-modules which
are isomorphic to a direct summand of a direct product of copies of $M$. Also, we write $\text{Cogen}(M)$ for the
subcategory of all $R$-modules $N_R$ admitting an $R$-monomorphism $N\rt X$ with $X\in \Prod(M)$.

Let $\mathcal{C}$ be a subcategory of $\text{Mod-}R$. Let $\widehat{\mathcal{C}}$ denote the subcategory of modules
$N_R$ admitting an exact sequence $$0\rt C_n\rt \cdots \rt C_1\rt C_0\rt N\rt 0$$ with each $C_i$ in $\mathcal{C}$.
The notation $\mathcal{C}^{\perp}$ refers to the subcategory of $R$-modules $N_R$ such that
$\Ext^{i\geqslant1}_{R}(X,N)=0$ for every $X\in \mathcal{C}$. Dually, the notation $~^{\perp}\mathcal{C}$ stands
for the subcategory of $R$-modules $N_R$ such that $\Ext^{i\geqslant 1}_{R}(N,X)=0$ for every $X\in \mathcal{C}$.

For an $R$-module $T_R$, the symbol $\gen^*(T)$ stands for the class of $R$-modules $N_R$ for which there exists
an exact sequence of the form $$\cdots\st{f_2}\longrightarrow T_1 \st{f_1}\longrightarrow  T_0 \st{f_0}\longrightarrow
N\longrightarrow  0$$ with each $T_i\in \add(T)$ and $\Ext^1_R(T,\Ker \ f_i)=0$ for all $i\geqslant 0$. Dually, the
class $\Cogen^*(T)$ (respectively,  $\cogen^*(T)$) is consisting of $R$-modules $N_R$ for which there exists an exact
sequence of the form $$0\longrightarrow  N \st{f^{-1}}\longrightarrow  T^0\st{f^0}\longrightarrow  T^1\st{f^1}
\longrightarrow  \cdots$$ with each $T^i\in \Prod(T)$ (respectively,  $T^i\in \add(T)$) and $\Ext^1_R(\Coker \ f^i,T)=0$
for all $i\geqslant -1$.

The notions $\Add(-), \add(-), \Prod(-), \Cogen(-), \gen^*(-), \Cogen^*(-)$  and $\cogen^*(-)$ for left $R$-modules
are all defined analogously.

Recall that an $R$-module $M_R$ (respectively, $_RM$) is called {\it self-orthogonal} if $\Ext^{i\geq 1}_{R}(M,M)=0$.
Following \cite{MI} and \cite{W}, we present the following two definitions.

\begin{definition}\label{2.1}  Let $T_R$ be a self-orthogonal $R$-module and $n\in \mathbb{N}_0$. We say that $T_R$ is
$n$-{\it tilting} if
\begin{itemize}
\item[(i)]  $T_R\in \gen^*(R)$ and $\pd(T_R)\leq n$, and
\item[(ii)] There is an exact sequence $$0\rt R\rt T_0 \rt T_1\rt \cdots \rt T_n\rt 0,$$ where $T_i\in \add(T)$ for all
$0\leq i\leq n$.
\end{itemize}
\end{definition}

\begin{definition}\label{2.2} A self-orthogonal $R$-module $T_R$ is called a {\it Wakamatsu tilting} module if
\begin{itemize}
\item[(i)] $T_R\in \gen^*(R)$, and
\item[(ii)] $R_R \in \cogen^*(T)$.
\end{itemize}
\end{definition}

It is easy to verify that every $n$-tilting right $R$-module is also a Wakamatsu tilting right $R$-module. The notions of
$n$-tilting and Wakamatsu tilting left $R$-modules are defined similarly. By \cite[Corollary 3.2]{W}, we have the following
characterization of the Wakamatsu tilting modules.

\begin{lemma}\label{2.3} For any bimodule $_ST_R$, the following conditions are equivalent:

$(1)$   $T_R$ is a Wakamatsu tilting module with $\End(T_R)\cong S;$

$(2)$   $_ST$ is a Wakamatsu tilting module with $\End(_ST)\cong R;$

$(3)$ One has
\begin{itemize}
\item[(i)]  $T_R\in \gen^*(R)$  and $~_ST\in \gen^*(S)$.
\item[(i)] $\End(T_R)\cong S$ and $\End(_ST)\cong R$.
\item[(iii)]  The modules $T_R$ and $_ST$ are self-orthogonal.
\end{itemize}
\end{lemma}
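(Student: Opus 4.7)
The plan is to exploit the symmetry of condition (3) in the pair $(R,S)$: once $(1)\Leftrightarrow(3)$ is proved, applying the same argument with the roles of the two rings interchanged yields $(2)\Leftrightarrow(3)$ and therefore $(1)\Leftrightarrow(2)$. So I would focus entirely on establishing $(1)\Leftrightarrow(3)$, and the main engine will be the two-sided reflexivity of the functors $\Hom_R(-,T)$ and $\Hom_S(-,T)$ on $\add(T)$.

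For $(1)\Rightarrow(3)$, I would start from the data provided by $T_R$ being Wakamatsu tilting, namely $T_R\in\gen^*(R)$, $\Ext^i_R(T,T)=0$ for $i>0$, and a coresolution
\[
0\to R\to T^0\to T^1\to\cdots,\qquad T^i\in\add(T),
\]
with $\Ext^1_R(\Coker f^i,T)=0$ for all $i\geq -1$. Apply $\Hom_R(-,T)$ and break the coresolution into short exact sequences. A standard dimension-shifting argument, combining the Ext-vanishing of the cokernels with $\Ext^{j\geq 1}_R(T,T)=0$, forces the dualized complex
\[
\cdots\to\Hom_R(T^1,T)\to\Hom_R(T^0,T)\to T\to 0
\]
to be exact. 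Each $\Hom_R(T^i,T)$ lies in $\add({}_SS)$, so this is a finitely generated projective resolution of ${}_ST$, giving ${}_ST\in\gen^*(S)$. To recover $R$ and confirm the remaining items of (3), I would apply $\Hom_S(-,{}_ST)$ to this resolution. The evaluation isomorphism $\Hom_S(\Hom_R(X,T),T)\cong X$ holds for $X=T$ (since $\End(T_R)\cong S$) and extends to all $X\in\add(T_R)$ by closure under finite sums and summands, so the resulting complex is exactly the original coresolution $0\to R\to T^0\to T^1\to\cdots$. Its exactness then reads as $\End({}_ST)\cong R$ in degree zero and $\Ext^i_S({}_ST,{}_ST)=0$ in higher degrees.

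For $(3)\Rightarrow(1)$, the argument proceeds symmetrically in reverse. A finitely generated projective $S$-resolution $\cdots\to S^{a_1}\to S^{a_0}\to{}_ST\to 0$ supplied by ${}_ST\in\gen^*(S)$ is dualized via $\Hom_S(-,{}_ST)$. Using $\End({}_ST)\cong R$ and $\Ext^i_S({}_ST,{}_ST)=0$, the result is an exact sequence $0\to R\to T^{a_0}\to T^{a_1}\to\cdots$ with terms in $\add(T_R)$, and only the $\cogen^*$-orthogonality remains to be verified. Through dimension shifting against the short exact sequences extracted from this coresolution and using $\Ext^{j\geq 1}_R(T,T)=0$, the condition $\Ext^1_R(\Coker f^i,T)=0$ collapses to the vanishing $\Ext^{j\geq 1}_R(T^{a_0}/R,T)=0$. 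The latter is controlled by the long exact Ext sequence attached to $0\to R\to T^{a_0}\to T^{a_0}/R\to 0$, where the required surjectivity of $\Hom_R(T^{a_0},T)\to\Hom_R(R,T)$ is precisely the next exact step in the dualized $S$-resolution.

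The principal obstacle I anticipate is the careful bookkeeping of the reflexivity isomorphism on $\add(T)$ from both sides: showing that the natural map $X\to\Hom_S(\Hom_R(X,T),T)$ is an isomorphism for $X\in\add(T_R)$, together with its symmetric counterpart on the $S$-side, is what permits the two-step dualization to return to its starting data. Once this reflexivity is firmly in hand, the preservation of all Ext-orthogonality conditions through the dualizations becomes routine dimension shifting built on the vanishings $\Ext^{\geq 1}_R(T,T)=0$ and $\Ext^{\geq 1}_S(T,T)=0$.
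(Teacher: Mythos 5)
Your argument is essentially correct, but there is nothing in the paper to compare it against: the authors do not prove Lemma \ref{2.3} at all, they simply quote it from Wakamatsu (\cite[Corollary 3.2]{W}). What you have reconstructed is, in substance, the standard double-dualization proof of that cited result: dualize the $\add(T)$-coresolution of $R$ with $\Hom_R(-,T)$ to obtain a finitely generated projective resolution of $_ST$ (exactness coming from the $\Ext^1(\Coker f^i,T)=0$ conditions), then dualize back with $\Hom_S(-,{}_ST)$ and use reflexivity on $\add(T)$ --- which reduces by additivity to the single identity $\Hom_S(\Hom_R(T,T),T)=\Hom_S(S,T)=T$, available from $\End(T_R)\cong S$ --- to read off $\End({}_ST)\cong R$ and $\Ext^{i\geq 1}_S(T,T)=0$ as the cohomology of the original exact coresolution; the converse runs the same machine in reverse, and the whole statement is symmetric in $(R,S)$. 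One spot in your sketch is looser than it should be: in $(3)\Rightarrow(1)$ the claim that the orthogonality conditions ``collapse to'' $\Ext^{j\geq 1}_R(T^{a_0}/R,T)=0$ is not quite how the reduction goes, since the dimension-shift $\Ext^1(\Coker f^i,T)\cong\Ext^{i+1}(T^{a_0}/R,T)$ already presupposes surjectivity of the intermediate restriction maps. The clean statement is that, given $\Ext^{j\geq 1}_R(T,T)=0$, the conditions $\Ext^1_R(\Coker f^i,T)=0$ for all $i$ are jointly equivalent to exactness of the $\Hom_R(-,T)$-dual of the coresolution, and that dual is exact because (by reflexivity) it is the original projective resolution of $_ST$ --- which is the observation you make in your final clause, so the gap is one of phrasing rather than substance. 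You should also take care that the isomorphisms $\End(T_R)\cong S$ and $\End({}_ST)\cong R$ are the natural homothety maps throughout, since the naturality is what makes the double dual of the coresolution agree with the coresolution itself as a complex.
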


From now on, when we say that $_S T_R$ is a Wakamatsu tilting bimodule, we mean that $T_R$ is a Wakamatsu tilting module
and $S \cong \End(T_R)$. If a bimodule $_ST_R$ is a Wakamatsu tilting module,  then by the above lemma, it follows that
both modules $T_R$ and $_ST$ are finitely presented.

\begin{lemma}\label{2.4} Let $_ST_R$ be a bimodule.
\begin{itemize}
\item[(i)] If $R$ is right Noetherian, then $\id((F\otimes_ST)_R)\leq \id(T_R)$ for every flat $S$-module $F_S$.
\item[(ii)] If $S$ is left coherent and $Q_R$ is an injective cogenerator for $\text{Mod-}R$, then $$\fd(\Hom_R(T,Q)_S)
=\id(_ST).$$
\end{itemize}
\end{lemma}

\begin{proof} (i) See \cite[Lemma 2.7(i)]{DMT}.

(ii) See \cite[Theorem 3.2.19 and Remarks 3.2.25 and 3.2.27]{EJ}.
\end{proof}

\section{Main Results}

In 1986, Miyashita presented the following definition for cotilting modules; see \cite[page 142]{MI}.

\begin{definition}\label{3.1} Let $C_R$ be an $R$-module and $S=\End(C_R)$. Assume that the ring $R$ is right
Noetherian and the ring $S$ is left Noetherian. We say that $C_R$ is {\it M-cotilting} if it is a Wakamatsu tilting
$R$-module and both $\id(C_R)$ and $\id(_SC)$ are finite.
\end{definition}

Let $A$ be a commutative Artinian ring with identity, and $R$ be an Artin $A$-algebra.  Let $\mathscr{E}$ be the minimal
injective cogenerator of $A$, and set $D(-)=\Hom_{A}(-,\mathscr{E})$. In 1991, Auslander and Reiten introduced the concept
of cotilting modules as an analogous to dualizing modules; see \cite{AR1} and \cite{AR2}.

\begin{definition}\label{3.2} Let $R$ and $D(-)$ be as above. A finitely generated self-orthogonal $R$-module
$C_R$ is called {\it AR-cotilting} if $\id(C_R)<\infty$, and there exists an exact sequence $$0\rt C_n\rt
\cdots \rt C_0\rt D(R)\rt 0$$ with each $C_i$ in $\add(C)$.
\end{definition}

It is easy to verify that an $R$-module $C_R$ is AR-cotilting if and only if $_RD(C)$ is an $n$-tilting $R$-module.
This definition was extended to (not necessarily finitely generated) modules over general associative rings with
identity by Angeleri H\"{u}gel and Coelho \cite{HC} in 2001.

\begin{definition}\label{3.3} An $R$-module $C_R$ is called \it{big cotilting} if
\begin{itemize}
\item[(i)] $\id(C_R)<\infty$,
\item[(ii)] $\Ext^{i\geq 1}_R(C^I,C)=0$ for every set $I$, and
\item[(iii)] there exists an injective cogenerator $Q_R$ for $\text{Mod-}R$ and an exact sequence $$0\rt C_n\rt \cdots
\rt C_0\rt Q\rt 0$$ with each $C_i$ in $\Prod(C)$.
\end{itemize}
\end{definition}

In 2014, Aihara, Araya, Iyama, Takahashi, and Yoshiwaki introduced the following definition for cotilting modules
over Noetherian rings; see \cite[Definition 5.1]{AAITY}. In this work, we extend their definition by relaxing the
Noetherian assumption.

\begin{definition}\label{3.4} A finitely generated self-orthogonal $R$-module $C_R$ is called \it{AAITY-cotilting} if
\begin{itemize}
\item[(i)] $\id(C_R)<\infty$, and
\item[(ii)] For any $X\in ~^\perp C\medcap \text{mod-}R$, there exists a short exact sequence $$0\rt X\rt C_0\rt X'
\rt 0$$ with $C_0\in \add(C)$ and $X'\in ~^\perp C$.
\end{itemize}
\end{definition}

The second condition in the above definition is equivalent to the inclusion $~^\perp C \medcap \text{mod-}R \subseteq
\cogen^*(C)$.

Whenever we consider any of the four types of cotilting modules discussed above, we assume that the rings satisfy the 
conditions required by their respective definitions.

The next result provides numerous examples of each type of the above cotilting modules.

\begin{example}\label{3.5}
\begin{itemize}
\item[(i)] Let $R$ be a connected finite-dimensional hereditary algebra over an algebraically closed field $\Bbbk$
of infinite representation type. Let $D(-):=\Hom_{\Bbbk}(-,\Bbbk)$. For any finitely generated $R$-module $N$, we set $\Tr~(N):=\Coker(\Hom_R(f,R)),$ where $P_1\st{f}\rt P_0\rt N\rt0$ is a minimal projective presentation
of $N$. Then the right $R$-module $(\Tr~ D)^n(R)$ is M-cotilting for every $n\geq 0$; see \cite[page 592]{M2}.
\item[(ii)] Let $\Gamma$ be a commutative Cohen-Macaulay local ring with a dualizing module $\omega$, and let $R$ be a
$\Gamma$-order (i.e. $R$ is a $\Gamma$-algebra and $R$ is maximal Cohen-Macaulay as a $\Gamma$-module).
By \cite[Proposition 2.12]{M2}, $\Hom_\Gamma(R,\omega)$ is an M-cotilting right $R$-module.
Furthermore, if $_RT$ is an $n$-tilting $R$-module that is also maximal Cohen-Macaulay as a $\Gamma$-module, then the
right $R$-module $\Hom_\Gamma(T,\omega)$ is AAITY-cotilting; see \cite[Example 5.2(1)]{AAITY}.
\item[(iii)] Let $R$ be an Artin algebra and $C_R$ an AR-cotilting $R$-module with $\id(C_R)=r$. Assume that $R\mathcal{Q}$
is the path algebra of the quiver $\mathcal{Q}:1\rightarrow 2 \rightarrow 3$. It is easy to see that $R\mathcal{Q}$ is
an Artin algebra. By \cite[Lemma 3.7]{Zh}, $$\textbf{C}=(0\rightarrow0 \rightarrow C)\oplus (0\rightarrow C \rightarrow C)
\oplus (C\rightarrow C\rightarrow C)$$ is an AR-cotilting right $R\mathcal{Q}$-module with $\id(\textbf{C}_{R\mathcal{Q}})=r+1$.
So, for any nonnegative integer $n$, we can construct an AR-cotilting module of injective dimension $n$.
\item[(iv)] Let $Q_R$ be an injective cogenerator for $\text{Mod-}R$. Then, clearly, $Q_R$ is a big cotilting module.
\item[(v)] Let $M=\mathbb{Q}\oplus\mathbb{Q/Z}$. Clearly, $M_{\mathbb{Z}}$ is an injective cogenerator of $\mathbb{Z}$. Let
$R=\left(\begin{matrix}
\mathbb{Z}& 0\\
\mathbb{Z}&\mathbb{Z}
\end{matrix}
\right).$
By \cite[Theorem 2.6] {Ma}, $C=(M\rt M\oplus M)$ is a big cotilting right $R$-module with $\id(C_R)=1$.
\end{itemize}
\end{example}

In Theorems \ref{3.9a}, \ref{3.15a}, and \ref{3.18aaa}, we compare the notions of M-cotilting, big cotilting, and
AAITY-cotilting modules. To prove these theorems, we require the following two lemmas.

\begin{lemma}\label{3.6} Let $C_R$ be a self-orthogonal $R$-module, and $S=\End(C_R)$. Assume that:\\
(i) The ring $S$ is left coherent.\\
(ii) There exists an injective cogenerator $Q_R$ for $\text{Mod-}R$ and an exact sequence $$\cdots \longrightarrow
C_n\overset{d_n}\longrightarrow \cdots \longrightarrow C_0\overset{d_0}\longrightarrow Q\longrightarrow 0$$ with
each $C_i\in \Prod(C)$ and $\Ext_R^1(C, \Ker \ d_i)=0$ for all $i\geq 0$.\\
Then, we have:
\begin{itemize}
\item[(a)]  If $_SC$ is finitely presented, then it is self-orthogonal and the natural homothety map $\chi:R\rt \Hom_S(C,C)$
is an isomorphism.
\item[(b)] If the ring $R$ is right coherent and both modules $C_R$ and $_SC$ are finitely presented, then $C_R$ is a Wakamatsu
tilting module.
\end{itemize}
\end{lemma}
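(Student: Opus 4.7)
For part (a), the plan is to apply $\Hom_R(C,-)$ to the given exact sequence
$$0\lrt C_n\lrt C_{n-1}\lrt\cdots\lrt C_0\lrt Q\lrt 0.$$
Since $\Ext^i_R(C,-)$ commutes with direct products in its second argument and vanishes on $C$ by (ii), we obtain $\Ext^i_R(C,C_j)=0$ for every $C_j\in\Prod(C)$ and $i\geq 1$; injectivity of $Q$ gives $\Ext^i_R(C,Q)=0$ for $i\geq 1$ as well. Splicing the resulting short exact sequences yields an exact sequence of right $S$-modules
$$0\lrt \Hom_R(C,C_n)\lrt\cdots\lrt \Hom_R(C,C_0)\lrt \Hom_R(C,Q)\lrt 0.$$
Each $\Hom_R(C,C_j)$ is a direct summand of $\Hom_R(C,C^{I_j})\cong S^{I_j}$, and by Chase's theorem the left coherence of $S$ makes $S^{I_j}$ a flat right $S$-module. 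Hence every term is flat and $\fd((\Hom_R(C,Q))_S)\leq n$.

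To convert this into an injective-dimension bound on $_SC$, I would set up the natural identification
$$\Hom_R(C,Q)\otimes_S N\;\cong\;\Hom_R(\Hom_S(N,C),Q),$$
valid for every finitely presented left $S$-module $N$: it holds tautologically at $N=S$, extends additively to finitely generated free modules, and then to fp $N$ via a finite free presentation and the exactness of $\Hom_R(-,Q)$. Deriving via a resolution of $N$ by finitely generated free $S$-modules (available thanks to left coherence of $S$) yields the duality
$$\Tor^S_i(\Hom_R(C,Q),N)\;\cong\;\Hom_R(\Ext^i_S(N,C),Q)\qquad (i\geq 0)$$
for fp $N$. Because $Q$ is a cogenerator and the flat dimension of $\Hom_R(C,Q)$ over $S$ is at most $n$, this forces $\Ext^i_S(N,C)=0$ for every fp $N$ and $i>n$; a standard extension using left coherence of $S$, through which injective dimension is controlled by the fp subcategory, then gives $\id(_SC)\leq n$ and completes (a).

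For (b), with the additional assumption that $_SC$ is finitely presented, the tensor-evaluation map $\Hom_R(C,X)\otimes_S C\rt X$ is an isomorphism at $X=C$, commutes with arbitrary products (by finite presentation of $_SC$), and commutes with direct summands; hence it is an isomorphism for every $X\in\Prod(C)$. Tensoring the flat resolution of $\Hom_R(C,Q)$ from (a) with $_SC$ therefore recovers the complex $0\rt C_n\rt\cdots\rt C_0\rt 0$, whose only homology is $Q$ in degree zero. Thus $\Hom_R(C,Q)\otimes_S C\cong Q$ and $\Tor^S_i(\Hom_R(C,Q),C)=0$ for all $i\geq 1$. Setting $N=C$ in the duality of (a) and invoking the cogenerator property of $Q$ then yields $\Ext^i_S(C,C)=0$ for all $i\geq 1$. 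In degree zero the same duality gives $\Hom_R(\Hom_S(C,C),Q)\cong Q\cong \Hom_R(R,Q)$, and a short trace through the natural identifications shows that this isomorphism is precisely $\Hom_R(\chi,Q)$; since $\Hom_R(-,Q)$ reflects isomorphisms when $Q$ is an injective cogenerator, $\chi\colon R\rt \Hom_S(C,C)$ is itself an isomorphism.

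Part (c) is then immediate from Lemma \ref{2.3}. Right coherence of $R$ together with $C_R$ finitely presented forces every syzygy of $C_R$ to be finitely generated, so $C_R\in\gen^*(R)$; symmetrically $_SC\in\gen^*(S)$. Combining these with $\End(C_R)=S$ by definition, $\End(_SC)\cong R$ from (b), and the Ext vanishings $\Ext^i_R(C,C)=0$ (hypothesis (ii)) and $\Ext^i_S(C,C)=0$ (from (b)) for $i\geq 1$, all three clauses of condition (3) of Lemma \ref{2.3} are met, so $C_R$ is Wakamatsu tilting. The main obstacle across the whole lemma is the Tor--Ext duality in (a) together with the final extension step from fp to arbitrary left $S$-modules; this is where the left coherence of $S$ is used most essentially, both through Chase's theorem for flatness of products of $S$ and through the good behaviour of the fp subcategory of left $S$-modules under which Ext and Tor interact compatibly.
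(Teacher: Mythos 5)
Your argument traces essentially the same route as the paper's. For (a) you splice the given complex into short exact sequences, use (ii) and the product-compatibility of $\Ext^{j}_R(C,-)$ to produce the exact sequence $0\rt\Hom_R(C,C_n)\rt\cdots\rt\Hom_R(C,C_0)\rt\Hom_R(C,Q)\rt0$, and invoke Chase's theorem via left coherence of $S$ to see that each $\Hom_R(C,C_i)$ is flat as a right $S$-module, so $\fd(\Hom_R(C,Q)_S)\leq n$; your (b) is the paper's argument via the evaluation map $\nu_X$ on $\Prod(C)$ (shown to be an isomorphism there using that $-\otimes_SC$ commutes with products when $_SC$ is finitely presented) together with the Hom-evaluation duality $\Tor^S_i(\Hom_R(C,Q),C)\cong\Hom_R(\Ext^i_S(C,C),Q)$; and your (c) is word-for-word the paper's, via Lemma~\ref{2.3}. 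The only real divergence is the last step of (a): the paper passes from $\fd(\Hom_R(C,Q)_S)\leq n$ to $\id(_SC)\leq n$ by citing $\id(_SC)=\fd(\Hom_R(C,Q)_S)$ from \cite[Theorem 3.2.19, Remarks 3.2.25 and 3.2.27]{EJ}, whereas you attempt to re-derive it.

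That re-derivation has a genuine soft spot. From $\fd(\Hom_R(C,Q)_S)\leq n$ and your duality you correctly get $\Ext^i_S(N,C)=0$ for every \emph{finitely presented} left $S$-module $N$ and all $i>n$, but the claimed ``standard extension using left coherence of $S$, through which injective dimension is controlled by the fp subcategory'' does not exist. Over a left coherent ring, vanishing of $\Ext^{>n}_S(-,C)$ on finitely presented modules bounds only the FP-injective (absolutely pure) dimension of $_SC$, not its genuine injective dimension: those two quantities agree for all modules exactly when $S$ is left Noetherian, and indeed $\fd(\Hom_R(C,Q)_S)$ computes FP-injective dimension in general. So as written your step does not close, and an extra ingredient (or a Noetherian hypothesis) is needed to land on $\id(_SC)\leq n$. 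You are in good company here — this is precisely the hinge of the paper's proof too, buried inside the reference to \cite{EJ} — but you should not present the step as routine. Note that your (b) and (c) are unaffected by this: they use only the flat resolution and the duality at $N=C$, and $_SC$ \emph{is} finitely presented by the hypotheses of those parts.
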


\begin{proof} (a) Since $\Ext_R^1(C,\Ker \ d_i)=0$ for all $i\geq 0$, it follows that the sequence
\begin{equation}
\cdots \rt \Hom_R(C,C_n)\rt \cdots \rt \Hom_R(C,C_0)\rt \Hom_R(C,Q)\rt 0  \label{7acc}
\end{equation}
is exact. For each $i$, since $C_i\in \Prod(C)$ and $S$ is left coherent, it follows from \cite[Theorem 2.1]{Ch} that
$\Hom_R(C,C_i)$ is a flat right $S$-module. Thus, \eqref{7acc} is a flat resolution of the right $S$-module $\Hom_R(C,Q)$.

For any $R$-module $M_R$, let $$\nu_M:\Hom_R(C,M)\otimes_SC\rt M$$ denote the evaluation map. Given a set $I$, it is
straightforward to check that $\nu_{C^I}$ is the composition of the following natural isomorphisms
$$\Hom_R(C,C^I)\otimes_SC\st{\cong}\longrightarrow \Hom_R(C,C)^I\otimes_SC\st{\cong}\longrightarrow (\Hom_R(C,C)
\otimes_SC)^I\st{\cong}\longrightarrow (S\otimes_SC)^I\st{\cong}\longrightarrow C^I.$$ Note that, as $_SC$ is finitely
presented, the functor $-\otimes_SC$ commutes with arbitrary direct products, and so, in the above display, the second
natural map is an isomorphism. Hence, $\nu_{C^I}$ is an isomorphism.

Let $X_R\in \Prod(C)$. Then there are a set $I$ and an $R$-module $Y_R$ such that $X\oplus Y=C^I$. The
split short exact sequence $$0\rt X\st{\lambda}\longrightarrow C^I\st{\pi}\longrightarrow Y\rt 0$$ yields the following
commutative diagram with split exact rows:

\vspace{0.5cm}
\begin{tikzcd}[row sep=2em, column sep =1.5em]
0\arrow{r} &\Hom_R(C,X)\otimes_SC\arrow{r}\arrow{d}{\nu_{X}} &\Hom_R(C,C^I)\otimes_SC\arrow{r}\arrow{d}{\nu_{C^I}}	
&\Hom_R(C,Y)\otimes_SC\arrow{r}\arrow{d}{\nu_{Y}} &0\\
0\arrow{r} &X\arrow{r} &C^I\arrow{r} &Y\arrow{r} &0.
\end{tikzcd}
\vspace{0.3cm}\\
Since $\nu_{C^I}$ is an isomorphism, it follows that $\nu_{X}$ is injective and $\nu_{Y}$ is surjective. By interchanging
the roles of $X$ and $Y$ in the above argument, it follows that $\nu_{X}$ is also surjective. Hence,  $\nu_{X}$ is an
isomorphism.

From the exact sequence $$\cdots \longrightarrow C_n\overset{d_n}\longrightarrow \cdots \longrightarrow C_0\overset{d_0}
\longrightarrow Q\longrightarrow 0,$$ we can induce the following commutative diagram:

\vspace{0.5cm}
\begin{tikzcd}[row sep=2em, column sep =1.5em]
\cdots\arrow{r} &\Hom_R(C,C_1)\otimes_SC\arrow{r}\arrow{d}{\nu_{C_1}} \arrow{r}  &\Hom_R(C,C_0)\otimes_SC
\arrow{r}\arrow{d}{\nu_{C_0}}	&\Hom_R(C,Q)\otimes_SC\arrow{r}\arrow{d}{\nu_{Q}} &0\\
\cdots \arrow{r} &C_1\arrow{r} \arrow{r}  &C_0\arrow{r} &Q\arrow{r} &0.
\end{tikzcd}
\vspace{0.3cm}\\
As the two maps $\nu_{C_0}$ and $\nu_{C_1}$ are isomorphisms, applying the five lemma to the right side of the above
diagram implies that the map $\nu_{Q}$ is also an isomorphism. Since in the above commutative diagram the bottom row
is exact and each vertical map is an isomorphism, it follows that the top row is also exact. Thus, as \eqref{7acc}
is a flat resolution of the right $S$-module $\Hom_R(C,Q)$, we conclude that $$\Hom_R(\Ext_S^i(C,C),Q)\cong
\Tor^S_i(\Hom_R(C,Q),C)=0$$ for all $i\geq 1$. Since $Q$ is an injective cogenerator for $\text{Mod-}R$, it follows
that it is faithfully injective, and so $\Ext_S^i(C,C)=0$ for all $i\geq 1$.

Let $$\theta_Q: \Hom_R(C,Q)\otimes_SC\rt \Hom_R(\Hom_S(C,C),Q)$$ be the Hom-evaluation isomorphism. Also, let
$\mu: Q\rt \Hom_R(R,Q)$ be the natural isomorphism and $\alpha=\Hom_R(\chi,\text{id}_Q)$. We have the following
commutative diagram:

\[
\begin{tikzcd}[row sep=large,column sep=huge]
\Hom_R(\Hom_S(C,C),Q)  \arrow[dr, "\alpha"] &  \Hom_R(C,Q)\otimes_SC \arrow[l, "\theta_Q"]  \arrow[d, "\mu \nu_{Q}"]\\
& \Hom_R(R,Q).
\end{tikzcd}
\]
\vspace{0.3cm}\\
Thus the map $\alpha$ is an isomorphism, because the two maps $\theta_Q$ and $\mu \nu_{Q}$ are so. As $Q$ is
faithfully injective, it follows that $\chi$ is an isomorphism.

(b) Since $C_R$ is finitely presented and $R$ is right coherent, it follows that $C_R\in \gen^*(R)$. Similarly, as $_SC$
is finitely presented and $S$ is left coherent, we deduce that $_SC\in \gen^*(S)$. On the other hand, by the assumption
and (a), we have $\End(C_R)=S$, $\End(_SC)\cong R$, and the modules $C_R$ and $_SC$ are self-orthogonal. Thus, $C_R$ is
a Wakamatsu tilting module by Lemma \ref{2.3}.
\end{proof}

\begin{lemma}\label{3.7} Let $C_R$ be an M-cotilting $R$-module. Let $Q_R$ be an injective cogenerator for $\text{Mod-}R$.
\begin{itemize}
\item[(a)] There exists an exact sequence $0\rt C_m\rt \cdots \rt C_0\rt Q\rt 0$ in which each $C_i$ belongs to $\Add(C)$.
\item[(b)] Assume that $R$ is a Noetherian algebra and that $Q_R$ is finitely generated. Then there exists an exact sequence
$0\rt C_m\rt \cdots \rt C_0\rt Q\rt 0$ in which each $C_i$ belongs to $\add(C)$.
\end{itemize}
\end{lemma}

\begin{proof} (a) Set $S=\End(C_R)$, and let $m=\id(_SC)$.  We have $$\fd(\Hom_R(C,Q)_S)=\id(_SC)=m$$ by Lemma \ref{2.4}(ii). 
Thus, we have an exact sequence
\begin{equation}
0\rt F_{m+1}\rt S^{(I_m)}\rt S^{(I_{m-1})}\rt \cdots \rt S^{(I_0)}\rt \Hom_R(C,Q)\rt 0, \label{8}
\end{equation}
where $F_{m+1}$ is a flat right $S$-module. As the $R$-module $Q_R$ is injective, by the Hom-evaluation isomorphism, one
deduces that $$\Tor_{i\geq 1}^S(\Hom_R(C,Q),C)\cong \Hom_R(\Ext_S^{i\geq 1}(C,C),Q)=0$$ and $\Hom_R(C,Q)\otimes_SC\cong Q$.
Thus applying the functor $-\otimes_SC$ to \eqref{8} yields the following exact sequence
$$0\longrightarrow F_{m+1}\otimes_SC\longrightarrow C^{(I_{m})}\st{d_m}\longrightarrow C^{(I_{m-1})}\longrightarrow
\cdots \longrightarrow C^{(I_0)}\st{d_0}\longrightarrow Q\longrightarrow 0.$$

Set $K_{-1}=Q$ and $K_j=\Ker ~d_j$ for every $j=0,..., m$. Showing that $K_{m-1}\in \Add(C)$, would complete the proof.
We have the short exact sequences
\begin{equation}
0\rt K_j\rt C^{(I_j)}\rt K_{j-1}\rt 0\  \  \ j=0,..., m.  \label{9}
\end{equation}
By \cite[Theorem 3.2.15]{EJ}, we conclude that
\begin{equation}
\begin{array}{lllll}
\Ext_R^{i\geq 1}(C^{(I_j)},K_{m})& \cong \Ext_R^{i\geq 1}(C,K_{m})^{I_j}\\
& \cong \Ext_R^{i\geq 1}(C,F_{m+1}\otimes_SC)^{I_j}\\
& \cong (F_{m+1}\otimes_S\Ext_R^{i\geq 1}(C,C))^{I_j}\\
& =0.
\label{10}
\end{array}
\end{equation}

By Lemma \ref{2.4}(i) and \cite[Theorem 2.7]{Hu2}, we have $$\id((K_m)_R)=\id((F_{m+1}\otimes_SC)_R)\leq \id(C_R)=\id(_SC)=
m.$$ Hence, using the exact sequences \eqref{9} and the isomorphisms  \eqref{10} yield that $$\Ext_R^1(K_{m-1}, K_{m})\cong
\Ext_R^{m+1}(Q, K_{m})=0.$$ Thus the short exact sequence $$0\rt K_m\rt C^{(I_m)}\rt K_{m-1}\rt 0$$ splits, and so $K_{m-1}
\in \Add(C)$.

(b) By \cite[Proposition 4.2(c)]{A}, the ring $S$ is also a Noetherian algebra. Consequently, applying parts (b) and (a) of
\cite[Proposition 4.2]{A} implies that the right $S$-module $\Hom_R(C,Q)$ is finitely generated. Hence, as the ring $S$ is
right Noetherian, in \eqref{8} we can take the sets $I_0, \ldots, I_m$ to be finite. Therefore, the remainder of the above
argument shows that there exists an exact sequence $$0\rt C_m\rt \cdots \rt C_0\rt Q\rt 0$$ with each $C_i\in \add(C)$.
\end{proof}

Next, we recall the notion of product-complete modules.

\begin{definition}\label{3.8a} An $R$-module $C_R$ is called {\it product-complete} if $\Prod(C)\subseteq \Add(C)$.
\end{definition}

As noted in \cite[Remark 3.5]{MR}, an $R$-module $C_R$ is product-complete if and only if $\Prod(C)=\Add(C)$. Furthermore,
if $S=\End(C_R)$ is left Noetherian, then by \cite[Corollary 4.4]{KS}, the $R$-module $C_R$ is product-complete if and only
if $_S C$ has finite length.

The following presents our first main result, which provides a comparison between M-cotilting modules and big cotilting
modules.

\begin{theorem}\label{3.9a}
Let $C_R$ be a finitely generated $R$-module, and let $S=\End(C_R)$.
\begin{itemize}
\item[(a)] Assume that the ring $R$ is right Noetherian, the ring $S$ is left Noetherian, and $_SC$ is finitely generated.
If $C_R$ is a big cotilting module, then it is also M-cotilting.
\item[(b)] Assume that $C_R$ is product-complete.
If $C_R$ is an M-cotilting module, then it is also big cotilting.
\end{itemize}
\end{theorem}

\begin{proof}
(a) Since $C_R$ is a big cotilting module, there exists an injective cogenerator $Q_R$ for $\text{Mod-}R$ and an exact
sequence

$$0\longrightarrow C_n\st{d_n}\longrightarrow  \cdots \longrightarrow  C_0\st{d_0}\longrightarrow  Q\longrightarrow 0$$
with each $C_i$ in $\Prod(C)$.

For each $j>0$, since the functor $\Ext_R^j(C,-)$ commutes with direct products, we have $\Ext_R^j(C,C_i)=0$ for all
$0\leq i\leq n$. Set $X_i = \im(d_i)$ for each $1\leq i\leq n$. Then we obtain the short exact sequences
$$0\rt C_n \rt C_{n-1}\rt X_{n-1}\rt 0,$$
$$0\rt X_i \rt C_{i-1}\rt X_{i-1}\rt 0, \  \ i=n-1, n-2, \dots, 2$$ and
$$0\rt X_1 \rt C_0\rt Q\rt 0.$$
From these sequences, it follows that $\Ext_R^1(C,X_i)=0$ for all $i=n-1,n-2,\dots,1$, and hence the sequence
\begin{equation}
0 \rt \Hom_R(C,C_n) \rt \cdots \rt \Hom_R(C,C_0) \rt \Hom_R(C,Q) \rt 0 \label{7a}
\end{equation}
is exact. Since $\Ext_R^1(C,\Ker \ d_i)=0$ for all $i\geq 0$, Lemma \ref{3.6}(b) implies that $C_R$ is Wakamatsu
tilting.

As noted in the proof of Lemma \ref{3.6}(a), each $\Hom_R(C,C_i)$ is a flat right $S$-module.
Thus, by Lemma \ref{2.4}(ii), we obtain $$\id(_SC)=\fd(\Hom_R(C,Q)_S)\leq n.$$
Therefore, $C_R$ is M-cotilting.

(b) Since $C_R$ is product-complete, we have $\Add(C)=\Prod(C)$. Thus, for any set $I$, there is a set $J$ such that $T^I$
is isomorphic to a direct summand of $T^{(J)}$. For any $i>0$, since $$\Ext^i_R(T^{(J)},T)\cong \Ext^i_R(T,T)^J=0,$$ it
follows that $\Ext^i_R(T^I,T)=0$. Therefore, the claim follows from Lemma \ref{3.7}(a).
\end{proof}

The following example shows that the assumption product-complete can't be relaxed in Theorem \ref{3.9a}(b).

\begin{example}\label{3.10a} Clearly, $\mathbb{Z}_{\mathbb{Z}}$ is an M-cotilting $\mathbb{Z}$-module. By
\cite[Example 2.1]{H}, we know that $\Ext_{\mathbb{Z}}^1(\mathbb{Z}^{\mathbb{N}},\mathbb{Z})\neq 0$, and
so $\mathbb{Z}_{\mathbb{Z}}$ is not a big cotilting $\mathbb{Z}$-module.
\end{example}

In the proof of Lemma \ref{3.12aaa}, we will use the following lemma.

\begin{lemma}\label{3.11a} Let $C_R$ be an $R$-module. Assume that $\Ext^{i\geq 1}_R(C^I,C)=0$ for every set $I$,
and $~^\perp C\subseteq \text{Cogen}(C)$. Then for every $X\in ~^\perp C$, there exists a short exact sequence
$$0\rt X\rt C_0\rt X'\rt 0$$ with $C_0\in \Prod(C)$ and $X'\in ~^\perp C$.
\end{lemma}

\begin{proof}  See \cite[Lemma 2.4(b)]{HC}.
\end{proof}

The next result establishes an equivalent condition to the third criterion in the definition of big cotilting modules.

\begin{lemma}\label{3.12aaa} Assume that an $R$-module $C_R$ satisfies the following conditions:\\
(i) $\id(C_R)<\infty$.\\
(ii) $\Ext^{i\geq 1}_R(C^I,C)=0$ for every set $I$. \\
Then the following are equivalent:
\begin{itemize}
\item[(a)] There exists an injective cogenerator $Q_R$ for $\text{Mod-}R$ and an exact sequence
$$0\rt C_n\rt \cdots \rt C_0\rt Q\rt 0$$
with each $C_i\in \Prod(C)$.
\item[(b)] $~^\perp C\subseteq \Cogen(C)$.
\item[(c)] For any $X\in ~^\perp C$, there exists a short exact sequence
$$0\rt X\rt C_0\rt X'\rt 0$$
with $C_0\in \Prod(C)$ and $X'\in ~^\perp C$. In other words, $~^\perp C\subseteq \Cogen^*(C)$.
\end{itemize}
\end{lemma}

\begin{proof} (a)$\Longleftrightarrow$(b) follows from the last paragraph of \cite{HC}.

(b)$\Longrightarrow$(c) follows from Lemma \ref{3.11a}.

(c)$\Longrightarrow$(b) is immediate.
\end{proof}

The following two lemmas will be used in the proof of Theorem \ref{3.15a}.

\begin{lemma}\label{3.13aaa} Let $C_R$ be a finitely generated self-orthogonal $R$-module. Assume that $C_R$ is
product-complete. Then $$~^{\perp} C\medcap \Cogen^*(C)\medcap \text{mod-}R\subseteq ~^{\perp} C\medcap
\cogen^*(C).$$ Moreover, if $R$ is  right Noetherian, then equality holds.
\end{lemma}

\begin{proof} Let $X_R$ be an $R$-module in $~^{\perp}C\medcap \Cogen^*(C)\medcap \text{mod-}R$. Since $X\in
~^{\perp} C\medcap \Cogen^*(C)$ and $\Prod(C)=\Add(C)$, we can construct a short exact sequence
\begin{equation}
0\rt X\rt C_0\rt X'\rt 0 \label{11a}
\end{equation}
with $C_0\in \Add(C)$ and $X'\in ~^\perp C$. As $C_0\in \Add(C)$, there exists a set $I$ and an $R$-module
$K_R$ such that $C_0\oplus K\cong C^{(I)}$. Using \eqref{11a}, we obtain a short exact sequence
\begin{equation}
0\rt X\overset{f}\rt C^{(I)}\rt X''\rt 0, \label{12a}
\end{equation}
where $X''=X'\oplus K$.

Since $X$ is finitely generated, there exists a finite subset $J$ of $I$ such that $\pi'(\im\, f)=0$, where
$\pi':C^{(I)}\rt C^{(I\setminus J)}$ is the natural projection. Let $\pi:C^{(I)}\rt C^{(J)}$ be the natural
projection and set $g=\pi f$. Thus we obtain the short exact sequence
\begin{equation}
0\rt X\overset{g}\rt C^{(J)}\rt \Coker\, g\rt 0. \label{13a}
\end{equation}
Obviously, $C^{(J)}\in \add(C)$, and we show that $\Coker, g\in ~^\perp C$. Since $X''\in ~^\perp C$, applying
the functor $\Hom_R(-,C)$ to \eqref{12a} yields that the map $\Hom_R(f,C)$ is surjective. Let $$\lambda: C^{(J)}
\rt C^{(I)}$$ be the natural inclusion map. Then $f=\lambda g$, and so $$\Hom_R(f,C)=\Hom_R(g,C)\Hom_R(\lambda,C).$$
Since $\Hom_R(f,C)$ is surjective, it follows that $\Hom_R(g,C)$ is also surjective. Using this, the long exact
sequence of Ext groups obtained by applying $\Hom_R(-,C)$ to \eqref{13a} shows that $\Ext_R^i(\Coker\, g,C)=0$
for all $i\geq 1$, and hence $\Coker\, g\in ~^\perp C$. Continuing in this manner yields that $$X\in ~^{\perp}
C\medcap \cogen^*(C).$$

The final assertion is clear.
\end{proof}

\begin{lemma}\label{3.14aaa} Let $\mathscr{X}$ be a class of right $R$-modules that is closed under extensions, and let
$\mathscr{W}\subseteq \mathscr{X}$. Suppose that for every $X\in \mathscr{X}$, there exists a short exact sequence
$0\rt X\rt C\rt X'\rt 0$ with $C\in \mathscr{W}$ and $X'\in \mathscr{X}$. Then for every $X\in \widehat{\mathscr{X}}$,
there exists a short exact sequences $0\rt K_X\rt C_X\rt X\rt 0$ with $C_X\in \mathscr{X}$ and $K_X\in \widehat{\mathscr{W}}$.
\end{lemma}

\begin{proof}  See \cite[Theorem 3.2(i)]{HC}.
\end{proof}

We are now prepared to present our second main result, which provides a comparison between big cotilting modules and
AAITY-cotilting modules.

\begin{theorem}\label{3.15a} Let $C_R$ be a finitely generated product-complete $R$-module.
\begin{itemize}
\item[(a)] If $C_R$ is a big cotilting module, then it is also AAITY-cotilting.
\item[(b)] Assume that the ring $R$ is right Noetherian and $\text{Mod-}R$ has a finitely generated injective cogenerator.
If $C_R$ is an AAITY-cotilting module, then it is also big cotilting.
\end{itemize}
\end{theorem}

\begin{proof} (a) As $C_R$ is a big cotilting module, Lemma \ref{3.12aaa} implies that $~^\perp C\subseteq \Cogen^*(C)$. 
Thus, by Lemma \ref{3.13aaa}, we deduce that $~^{\perp} C\medcap \text{mod-}R\subseteq \cogen^*(C).$ This yields that 
$C_R$ is an AAITY-cotilting $R$-module.

(b) Since $\Prod(C)=\Add(C)$ and $\Ext^{i\geq 1}_{R}(C,C)=0$, it follows that $\Ext^{i\geq 1}_R(C^I,C)=0$ for every
set $I$.

Let $Q_R$ be a finitely generated injective cogenerator for $\text{Mod-}R$. Set $\mathscr{H}=~^\perp C$, $\mathscr{X}
=~^{\perp}C\medcap \text{mod-}R$ and $\mathscr{W}=\add(C)$. By adopting the proof of \cite[Lemma 2.2(b)]{HC}, we can
show that $\widehat{\mathscr{X}}=\text{mod-}R$, and so $Q\in \widehat{\mathscr{X}}$. Since $C_R$ is an AAITY-cotilting,
the assumptions of Lemma \ref{3.14aaa} are satisfied for the pair $\mathscr{W}\subseteq \mathscr{X}.$ Hence,
there is a short exact sequence
\begin{equation}
0\rt K_Q\rt C_Q\rt Q\rt 0, \label{14}
\end{equation}
with $C_Q\in \mathscr{X}$ and $K_Q\in \widehat{\mathscr{W}}$. As $K_Q\in \widehat{\mathscr{W}}$, there is an exact
sequence
\begin{equation}
0\rt C_n\rt \cdots \rt C_1\rt K_Q\rt 0 \label{15}
\end{equation}
with each $C_j\in \mathscr{W}$, and so $K_Q\in \mathscr{H}^\perp$. Since also $Q$ belongs to $\mathscr{H}^\perp$, by
\eqref{14}, it follows that $C_Q\in \mathscr{H}^\perp$. As $C_R$ is an AAITY-cotilting and $C_Q\in \mathscr{X}$, there
is a short exact sequence
\begin{equation}
0\rt C_Q\rt C_0\rt X'\rt 0, \label{16}
\end{equation}
with $C_0\in \add(C)$ and $X'\in \mathscr{H}$. Since $X'\in \mathscr{H}$ and $C_Q\in \mathscr{H}^\perp$, we deduce
that $\Ext_R^1(X',C_Q)=0$, so the sequence \eqref{16} splits and $C_Q$ lies in $\add(C)$. Now, combining the exact
sequences \eqref{14}
and \eqref{15} yields the exact sequence $$0\rt C_n\rt \cdots \rt C_1\rt C_0\rt Q\rt 0,$$ where $C_0=C_Q$. Since
$\add(C)\subseteq \Prod(C)$, this concludes the proof of the claim.
\end{proof}

For the following two definitions, we refer the reader to \cite{AR2} and \cite{Hu1}.

\begin{definition}\label{3.16aa} Let $_ST_R$ be a Wakamatsu tilting bimodule. Assume that the ring $R$ is right
Noetherian and the ring $S$ is left Noetherian. A finitely generated $R$-module $M_R$ is said to have {\it generalized
Gorenstein dimension zero} with respect to $T$ if the following conditions are satisfied:
\begin{itemize}
\item[(i)] $\Ext^{i\geqslant1}_R(M,T)=0$.
\item[(ii)] $\Ext^{i\geqslant1}_S(\Hom_R(M,T),T)=0$.
\item[(iii)] The natural map $\theta_M: M\to \Hom_S(\Hom_R(M,T),T)$ is an isomorphism.
\end{itemize}
We use $\mathcal{G}_T$ to denote the full subcategory of $\text{mod-}R$ consisting of the modules with generalized
Gorenstein dimension zero with respect to $T$.
\end{definition}

\begin{definition}\label{3.17aa} Let $_ST_R$ be a Wakamatsu tilting bimodule. Assume that the ring $R$ is right
Noetherian and the ring $S$ is left Noetherian. A finitely generated $R$-module $M_R$ is said to have {\it generalized
Gorenstein dimension at most $n$} with respect to $T$, denoted by $\text{G-dim}_{T}(M_R)\leq n$, if there exists an exact
sequence $$0\rt G_n\rt \cdots \rt G_1\rt G_0\rt M\rt 0$$ with each $G_i\in \mathcal{G}_T$.
\end{definition}

Finally, we are in a position to state our concluding main result, which provides a comparison between M-cotilting
modules and AAITY-cotilting modules.

\begin{theorem}\label{3.18aaa} Let $C_R$ be an $R$-module, and let $S=\End(C_R)$.
\begin{itemize}
\item[(a)] If $C_R$ is an M-cotilting module, then it is also AAITY-cotilting.
\item[(b)] Assume that the ring $R$ is right Noetherian and the ring $S$ is left Noetherian. If $C_R$ is an
AAITY-cotilting module, then it also is M-cotilting.
\end{itemize}
\end{theorem}

\begin{proof} (a) Since $C_R$ is M-cotilting, by \cite[Proposition 5.6]{W}, we have $^{\perp}C\medcap
\text{mod-}R\subseteq \cogen^*(C)$. Hence, $C_R$ is AAITY-cotilting.

(b) Let $n=\id(C_R)$. Since $C_R$ is finitely generated and $R$ is right Noetherian, we have $C_R \in \gen^*(R)$.
Moreover, as $R\in ~^\perp C\medcap \text{mod-}R$, the second condition in the definition of AAITY-cotilting modules
implies $R_R\in \cogen^*(C)$. Hence, $C_R$ is a Wakamatsu tilting $R$-module. It remains to show that $\id(_SC)<\infty$.

Because $C_R$ is AAITY-cotilting, $^\perp C \medcap \text{mod-}R\subseteq \cogen^*(C)$. Thus, by \cite[Proposition
2.5]{Hu1}, $$\CY_{C}=~^\perp{C}\medcap \cogen^*(C)=(^\perp{C}\medcap \text{mod-}R)\medcap \cogen^*(C)=~^\perp C \medcap
\text{mod-}R.$$ By \cite[Proposition 2.3]{Hu1}, this equality implies that for every $R$-module $M_R$ in $^\perp C
\medcap \text{mod-}R$, the natural map $$\theta_M: M\to \Hom_S(\Hom_R(M,C),C)$$ is injective. Combining this with
$\id(C_R)=n$ and applying \cite[Theorem 3.8]{Hu1}, we conclude that $\text{G-dim}_{C}(M_R)\leq n$ for every
finitely generated $R$-module $M_R$. Finally, \cite[Theorem 3.6]{ZY} yields $\id(_SC)=n$.
\end{proof}

We conclude the paper with three corollaries of Theorems \ref{3.9a}, \ref{3.15a} and \ref{3.18aaa}.

\begin{corollary}\label{3.19} Let $C_R$ be a finitely generated product-complete $R$-module with $S=\End(C_R)$.
Assume that the ring $R$ is right Noetherian and the ring $S$ is left Noetherian. Then the following conditions are
equivalent:
\begin{itemize}
\item[(a)] $C_R$ is an M-cotilting module.
\item[(b)] $C_R$ is a big cotilting module.
\item[(c)] $C_R$ is an AAITY-cotilting module.
\end{itemize}
\end{corollary}

\begin{proof} As the ring $S$ is left Noetherian and $C_R$ is product-complete, \cite[Corollary 4.4]{KS}
implies that $_SC$ has finite length. In particular, the $S$-module $_SC$ is finitely generated. Now, the
equivalence (a) and (b) is immediate by Theorem \ref{3.9a}, while the equivalence of (a) and (c) follows
from Theorem \ref{3.18aaa}.
\end{proof}

Keeping Corollary \ref{3.19} in mind, we investigate when finitely generated right $R$-modules are
product-complete. The following result shows that this holds over right Artinian Noetherian algebras.

\begin{corollary}\label{3.20}
Let $R$ be a Noetherian algebra and let $C_R$ be a finitely generated $R$-module. Assume that $R$ is right Artinian.
Then $C_R$ is product-complete, and the following are equivalent:
\begin{itemize}
\item[(a)] $C_R$ is an M-cotilting module.
\item[(b)] $C_R$ is a big cotilting module.
\item[(c)] $C_R$ is an AAITY-cotilting module.
\end{itemize}
\end{corollary}

\begin{proof} Set $S=\End(C_R)$. There exists a commutative Noetherian ring $A$ with identity and a ring homomorphism
$f\colon A\to R$ such that $R$ is finitely generated as an $A$-module via $f$. By \cite[Proposition 4.2(c)]{A}, the 
ring $S$ is also a Noetherian algebra. In particular, $S$ is left Noetherian. Therefore, by Corollary \ref{3.19}, to 
complete the proof it suffices to show that $C_R$ is product-complete.

Since $C_R$ is finitely generated and $R$ is right Artinian, $C_R$ has finite length. By \cite[Proposition 4.2(d)]{A}, 
the module $C$ has finite length as an $A$-module, and applying \cite[Proposition 4.2(d)]{A} once more shows that $_SC$ 
has finite length as well. As $S$ is left Noetherian and $_SC$ has finite length, \cite[Corollary 4.4]{KS} implies that 
$C_R$ is product-complete.
\end{proof}

To present the second corollary, we first recall the notion of right Morita rings; see e.g. \cite{H} and \cite{He}.

\begin{definition}\label{3.21}
A ring $R$ is called {\it right Morita} if there exist a ring $S$ and a bimodule $_S U_R$ such that the functors
$\Hom_R(-,U)\colon \text{mod-}R \to S\text{-mod}$ and $\Hom_S(-,U)\colon S\text{-mod}\to \text{mod-}R$ induce a
duality of categories.
\end{definition}

It is straightforward to verify that every Artin algebra is a right Morita ring. Conversely, right Morita rings
are not necessarily Artin algebras; see \cite[Example 3.6]{H}.

Let $R$ be a right Morita ring with a bimodule $_S U_R$ as in the above definition. By \cite[Corollary 24.9]{AF},
$R$ is right Artinian, and $U_R$ is a finitely generated injective cogenerator for $\text{Mod-}R$.

\begin{corollary}\label{3.22}
Let $R$ be a Noetherian algebra and let $C_R$ be a finitely generated $R$-module. Assume that $R$ is right Morita with
a finitely generated injective cogenerator $U_R$. Then the following conditions are equivalent:
\begin{itemize}
\item[(a)] $C_R$ is an M-cotilting module.
\item[(b)] $C_R$ is self-orthogonal, $\id(C_R)<\infty$, and there exists an exact sequence
$$0\rt C_n\rt \cdots \rt C_0\rt U\rt 0$$
with each $C_i$ in $\add(C)$.
\item[(c)] $C_R$ is a big cotilting module.
\item[(d)] $C_R$ is an AAITY-cotilting module.
\end{itemize}
\end{corollary}

\begin{proof}
Since $R$ is right Morita, it is right Artinian. Hence, by Corollary \ref{3.20}, $C_R$ is product-complete, and
conditions (a), (c), and (d) are equivalent.

\smallskip
(a)$\Longrightarrow$(b) Since $R$ is a Noetherian algebra, the assertion follows from Lemma \ref{3.7}(b).

\smallskip
(b)$\Longrightarrow$(c) As $C_R$ is self-orthogonal and product-complete, we have
$\Ext^{i\ge 1}_R(C^I,C)=0$ for every set $I$. Since $\add(C)\subseteq \Prod(C)$, the claim follows.
\end{proof}

Example \ref{3.10a} shows that the assertion of the above result does not hold for arbitrary Noetherian
algebras.

\begin{corollary}\label{3.23}
Let $R$ be an Artin algebra, and let $C_R$ be a finitely generated $R$-module. The following
conditions are equivalent:
\begin{itemize}
\item[(a)] $C_R$ is an M-cotilting module.
\item[(b)] $C_R$ is an AR-cotilting module.
\item[(c)] $C_R$ is a big cotilting module.
\item[(d)] $C_R$ is an AAITY-cotilting module.
\end{itemize}
\end{corollary}

\begin{proof} There exists a commutative Artinian ring $A$ with identity such that $R$ is a Noetherian $A$-algebra.
Let $\mathscr{E}$ be the minimal injective cogenerator of $A$, and set $U=\Hom_{A}(R,\mathscr{E})$. Then $U_R$ is a 
finitely generated injective cogenerator for $\text{Mod-}R$. The claim now follows immediately from Corollary \ref{3.22}.
\end{proof}


\end{document}